\documentclass[numbook,envcountsame,smallcondensed,draft]{amsart}

\usepackage{amssymb,amsmath,amsfonts,cite,color}

\DeclareMathOperator{\var}{var}

\newtheorem{theorem}{Theorem}

\newtheorem{proposition}{Proposition}

\newtheorem{lemma}{Lemma}

\theoremstyle{definition}

\newtheorem{question}{Question}

\makeatletter

\renewcommand*\subjclass[2][2010]{\def\@subjclass{#2}\@ifundefined{subjclassname@#1}{\ClassWarning{\@classname}{Unknown edition (#1) of Mathematics Subject Classification; using '2010'.}}{\@xp\let\@xp\subjclassname\csname subjclassname@#1\endcsname}}

\renewcommand{\subjclassname}{\textup{2010} Mathematics Subject Classification}

\makeatother

\begin{document}

\title[Standard elements of the lattice of monoid varieties]{Standard elements of the lattice\\ of monoid varieties}

\thanks{The work is supported by the Ministry of Science and Higher Education of the Russian Federation (project FEUZ-2020-0016)}

\author{S.V.~Gusev}

\address{Ural Federal University, Institute of Natural Sciences and Mathematics, Lenina 51, 620000 Ekaterinburg, Russia}

\email{sergey.gusb@gmail.com}

\date{}

\begin{abstract}
We completely classify all standard elements in the lattice of all monoid varieties. In particular, we prove that an element of this lattice is standard if and only if it is neutral.
\end{abstract}

\keywords{Monoid, variety, lattice of varieties, standard element of a lattice, neutral element of a lattice}

\subjclass{20M07, 08B15}

\maketitle

The article is devoted to investigation of the lattice of varieties of monoids which will be denoted by $\mathbb{MON}$ (when referring to monoid varieties,  we consider monoids as semigroups equipped by an additional \mbox{0-ary} operation that fixes the identity element).  Until recently, this lattice has been studied very little. However, recently, the articles~\cite{Gusev-18-IzVUZ,Gusev-19,Gusev-18-AU,Gusev-Vernikov-18,Jackson-Lee-18} which are devoted to this subject appeared. In particular, the study of the special elements of the lattice $\mathbb{MON}$ has been started in~\cite{Gusev-18-AU}. In this paper, we continue to study them.

Let us recall definitions of special elements which will be used below. An element $x$ of a lattice $L$ is called
\begin{align*}
&\text{\emph{neutral} if}&&\forall\,y,z\in L\colon\ (x\vee y)\wedge(y\vee z)\wedge(z\vee x)\\
&&&\phantom{\forall\,y,z\in L\colon}{}=(x\wedge y)\vee(y\wedge z)\vee(z\wedge x);\\
&\text{\emph{standard} if}&&\forall\,y,z\in L\colon\quad(x\vee y)\wedge z=(x\wedge z)\vee(y\wedge z);\\
&\text{\emph{modular} if}&&\forall\,y,z\in L\colon\quad y\le z\rightarrow(x\vee y)\wedge z=(x\wedge z)\vee y;\\
&\text{\emph{lower-modular} if}&&\forall\,y,z\in L\colon\quad x\le y\rightarrow x\vee(y\wedge z)=y\wedge(x\vee z).
\end{align*}
\emph{Costandard} and \emph{upper-modular} elements are defined dually to standard and lower-modular elements respectively. It is evident that a neutral element is both standard and costandard; a standard element is both modular and lower modular; a costandard element is both modular and upper-modular. Some information about special elements in arbitrary lattices can be found in~\cite[Section~III.2]{Gratzer-11}. 

The neutral and costandard elements of the lattice $\mathbb{MON}$ were completely described in~\cite{Gusev-18-AU}. In this paper, we classify the standard elements of this lattice.

We need some definitions and notation. The free monoid over a countably infinite alphabet is denoted by $F^1$. As usual, elements of $F^1$ are called \emph{words}, while elements of $A$ are said to be \emph{letters}. The words unlike letters are written in bold. Two parts of an identity are connected by the symbol $\approx$, while the symbol $=$ denotes, among other things, the equality relation on the free monoid. The trivial variety of monoids is denoted by $\mathbf T$, while $\mathbf{MON}$ denotes the variety of all monoids.  We denote by $\mathbf{SL}$ the variety of all semilattice monoids. Monoid variety given by an identity system $\Sigma$ is denoted by $\var\Sigma$. Put 
$$
\mathbf C_2=\var\{x^2\approx x^3,\,xy\approx yx\}.
$$ 
For convenience, we formulate the main results of~\cite{Gusev-18-AU}.

\begin{proposition}[{\!\!\cite[Theorem~1.1]{Gusev-18-AU}}]
\label{neutral theorem}
For a monoid variety $\bf V$, the following are equivalent:
\begin{itemize}
\item[\textup{(i)}] $\bf V$ is a modular, lower-modular and upper-modular element of the lattice $\mathbb{MON}$;
\item[\textup{(ii)}] $\bf V$ is a neutral element of the lattice $\mathbb{MON}$;
\item[\textup{(iii)}] $\bf V$ is one of the varieties $\mathbf T$, $\mathbf{SL}$ or $\mathbf{MON}$.\qed
\end{itemize}
\end{proposition}

\begin{proposition}[{\!\!\cite[Theorem~1.2]{Gusev-18-AU}}]
\label{costandard theorem}
For a monoid variety $\bf V$, the following are equivalent:
\begin{itemize}
\item[\textup{(i)}] $\bf V$ is a modular and upper-modular element of the lattice $\mathbb{MON}$;
\item[\textup{(ii)}] $\bf V$ is a costandard element of the lattice $\mathbb{MON}$;
\item[\textup{(iii)}] $\bf V$ is one of the varieties $\mathbf T$, $\mathbf{SL}$, $\mathbf C_2$ or $\mathbf{MON}$.\qed
\end{itemize}
\end{proposition}

The main result of the paper is the following

\begin{theorem}
\label{main result}
For a monoid variety $\mathbf V$, the following are equivalent:
\begin{itemize}
\item[\textup{(i)}] $\mathbf V$ is a modular and lower-modular element of the lattice $\mathbb{MON}$;
\item[\textup{(ii)}] $\mathbf V$ is a standard element of the lattice $\mathbb{MON}$;
\item[\textup{(iii)}] $\mathbf V$ is a neutral element of the lattice $\mathbb{MON}$;
\item[\textup{(iv)}] $\mathbf V$ is one of the varieties $\mathbf T$, $\mathbf{SL}$ or $\mathbf{MON}$.
\end{itemize}
\end{theorem}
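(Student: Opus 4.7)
The circle of implications naturally splits into easy and hard parts. The implications (iii) $\Rightarrow$ (ii) $\Rightarrow$ (i) are pure lattice theory and were already recorded in the preliminaries of the paper: a neutral element is standard, and a standard element is both modular and lower-modular. The implication (iv) $\Rightarrow$ (iii) would follow immediately from the complete classification of neutral elements of $\mathbb{MON}$ obtained in~\cite{Gusev-18-AU}, which presumably identifies exactly these three varieties. So the whole theorem reduces to proving the nontrivial implication (i) $\Rightarrow$ (iv).

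My plan for (i) $\Rightarrow$ (iv) is contrapositive: assume $\mathbf V \notin \{\mathbf T,\mathbf{SL},\mathbf{MON}\}$ and produce a pair of varieties $\mathbf Y$, $\mathbf Z$ that witnesses a failure of either modularity or lower-modularity for $\mathbf V$ in $\mathbb{MON}$. I would try to organize the analysis according to where $\mathbf V$ sits relative to $\mathbf{SL}$: either (a) $\mathbf V$ contains $\mathbf{SL}$ properly but is not $\mathbf{MON}$, or (b) $\mathbf V$ does not contain $\mathbf{SL}$ and is nontrivial, or (c) $\mathbf V$ lies strictly between $\mathbf T$ and $\mathbf{SL}$ (in fact $\mathbf T \lessdot \mathbf{SL}$, so this case is empty or easy). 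In each surviving case one needs a concrete choice of $\mathbf Y,\mathbf Z$ so that the identity $(\mathbf V\vee\mathbf Y)\wedge\mathbf Z=(\mathbf V\wedge\mathbf Z)\vee\mathbf Y$ (with $\mathbf Y\le\mathbf Z$) or the dual identity for lower-modularity fails. Plausibly, small aperiodic or group-theoretic witness varieties built from short words (Zimin-type identities, or the monoid varieties appearing in~\cite{Gusev-18-IzVUZ,Gusev-Vernikov-18,Jackson-Lee-18}) do the job; one would argue via well-chosen identity bases that the two sides of the modular or lower-modular identity differ.

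The main obstacle is case (a): when $\mathbf V$ already contains $\mathbf{SL}$, one cannot destroy modularity or lower-modularity by playing only with semilattice-like test varieties, since the semilattice content of $\mathbf V$ absorbs them. The witnessing $\mathbf Y$ and $\mathbf Z$ will have to carry genuinely non-semilattice behaviour while still being small enough that the meets on both sides can be computed exactly from their defining identities. I expect the paper to invoke specific ``forbidden subvariety'' lemmas from the earlier investigations of modular/lower-modular elements of $\mathbb{MON}$, analogous to the way the neutral element classification was obtained in~\cite{Gusev-18-AU}, and to combine them with a handful of ad hoc identity computations to cover the remaining finitely many configurations; getting these computations to line up simultaneously for modularity \emph{and} lower-modularity is the delicate part.
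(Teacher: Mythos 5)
Your reduction of the theorem to the implication (i) $\Rightarrow$ (iv) is correct and matches the paper: (iii) $\Leftrightarrow$ (iv) is~\cite[Theorem~1.1]{Gusev-18-AU}, and (iii) $\Rightarrow$ (ii) $\Rightarrow$ (i) is general lattice theory. But for (i) $\Rightarrow$ (iv) itself you have written a research plan, not a proof: you never name the witness varieties, never verify a single failure of modularity or lower-modularity, and explicitly defer the substance to lemmas you ``expect the paper to invoke.'' That deferred substance is the entire content of the theorem, so there is a genuine gap. Concretely, three things are missing. First, the paper's opening move is to show that a proper lower-modular element of $\mathbb{MON}$ must be \emph{periodic} (its Lemma~\ref{lower-modular is periodic}, adapted from Vernikov's semigroup argument using minimal non-Abelian group varieties and the fact that joins with aperiodic varieties do not add new groups); your outline has no analogue of this step, and without it the non-periodic case is untreated. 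Second, the paper's case split is by complete regularity, not by position relative to $\mathbf{SL}$ as you propose: if $\mathbf V$ is completely regular, modularity forces it to be commutative and aperiodic by Lemmas~3.1 and~3.2 of~\cite{Gusev-18-AU}, hence $\mathbf V\subseteq\mathbf{SL}$. Your case (a) (``$\mathbf V$ contains $\mathbf{SL}$ properly'') is exactly where you concede the difficulty, and it is not resolved by your framework.

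Third, and most importantly, the non-completely-regular case requires two concrete constructions that your proposal does not supply. Periodicity gives an identity $x^n\approx x^{n+m}$, hence $\mathbf C_{n+1}\nsubseteq\mathbf V$; the paper's Lemma~\ref{if V is mod element} then shows that if additionally $\mathbf E\subseteq\mathbf V$, the varieties $\mathbf X=(\mathbf V\vee\mathbf C_{n+1})\wedge\var\{x^ky\approx yx^k\mid k>n\}$ and $\mathbf Y=(\mathbf V\vee\mathbf C_{n+1})\wedge\var\{x^ky\approx yx^k\mid k\ge n\}$ form with $\mathbf V$ a pentagon (the strictness $\mathbf Y\subset\mathbf X$ being certified by the Rees quotient $S(yx^n)$), contradicting modularity; so $\mathbf E\nsubseteq\mathbf V$. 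Then with $\mathbf W=\mathbf V\vee\mathbf E$ and $\mathbf{LRB}=\var\{xy\approx xyx\}$ one computes $\mathbf V\vee(\mathbf W\wedge\mathbf{LRB})=\mathbf V\vee\mathbf{SL}=\mathbf V$ (using that $\mathbf{LRB}\nsubseteq\mathbf W$ via the identity $y^2x^r\approx x^sy^2x^t$ and that the subvariety lattice of $\mathbf{LRB}$ is a three-element chain), while $\mathbf W\wedge(\mathbf V\vee\mathbf{LRB})=\mathbf W$ (using $\mathbf C_2\subseteq\mathbf V$ and Lee's result $\mathbf E\subset\mathbf C_2\vee\mathbf{LRB}$), which violates lower-modularity since $\mathbf V\subset\mathbf W$. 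None of these identity computations, nor the specific varieties $\mathbf E$, $\mathbf C_{n+1}$, $\mathbf{LRB}$, $S(yx^n)$ that make them work, appear in your proposal; supplying them is where all the difficulty of the theorem lies.
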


We note that the equivalence of the claims (iii) and (iv) of Theorem \ref{main result} follows from Proposition~\ref{neutral theorem}. It is natural to compare Theorem~\ref{main result} and Propositions~\ref{neutral theorem} and~\ref{costandard theorem} with the results concerning the special elements of lattice of semigroup varieties denoted by $\mathbb{SEM}$ (the survey of these results can be found in\cite{Vernikov-15}). The properties of being neutral and standard elements are not equivalent in the lattice $\mathbb{SEM}$ (this fact follows from Theorems~3.3 and 3.4 of~\cite{Vernikov-15}), while the properties of being neutral and costandard elements are equivalent in this lattice (see~\cite[Theorem~3.4]{Vernikov-15}). In contrary with the semigroup case, the properties of being neutral and costandard elements are not equivalent (see Propositions~\ref{neutral theorem} and~\ref{costandard theorem}), while the properties of being neutral and standard elements are equivalent in $\mathbb{MON}$ (Theorem~\ref{main result} of this work). Theorem~\ref{main result} implies that an element of $\mathbb{MON}$ is neutral if and only if it is both modular and lower-modular in $\mathbb{MON}$. However, this is not true for the lattice $\mathbb{SEM}$ (this fact follows from Theorems~3.2 and~3.4 and Corollary~3.9 of~\cite{Vernikov-15}). At the same time, an element of $\mathbb{SEM}$ is neutral if and only if it is both upper-modular and lower-modular (see~\cite[Theorem~3.4]{Vernikov-15}). The question about whether the same result holds in the lattices $\mathbb{MON}$ remains open. Finally, every standard element is  costandard one but a standard element does not have to be costandard one in $\mathbb{MON}$ (Theorem~\ref{main result} and Proposition~\ref{costandard theorem}). At the same time, every costandard element is standard one but the properties of being standard and costandard elements are not equivalent in $\mathbb{SEM}$ (Theorems~3.3 and~3.4 of~\cite{Vernikov-15}).

\smallskip

To prove the main result, we need several auxiliary statements. We start with the fact that is a part of the semigroup folklore (it is noted in~\cite[Section 1.1]{Jackson-Lee-18} and~\cite[Proposition~2.1]{Gusev-18-AU}, for instance).

\begin{proposition}
\label{MON sublattice of SEM}
The map from $\mathbb{MON}$ into $\mathbb{SEM}$ that maps a monoid variety generated by a monoid $M$ to the semigroup variety generated by $M$ is an embedding of the lattice $\mathbb{MON}$ into the lattice $\mathbb{SEM}$.\qed
\end{proposition}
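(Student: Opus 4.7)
The plan is to define the map explicitly as $\mathbf{V}\mapsto\mathbf{V}^s$, where $\mathbf{V}^s$ is the semigroup variety generated by the semigroup reducts of the monoids in $\mathbf{V}$ (i.e., the monoids of $\mathbf{V}$ viewed as semigroups, forgetting the distinguished identity). This definition does not depend on any choice of generating monoids, so well-definedness is immediate, and order-preservation is obvious.

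The crux of the proof will be the equivalence: for any monoid $M$, one has $M\in\mathbf{V}$ if and only if the semigroup reduct $M^s$ lies in $\mathbf{V}^s$. The forward direction is immediate from the definition of $\mathbf{V}^s$. For the converse, I would invoke the classical fact that every monoid variety admits a defining system of identities written in the pure semigroup signature (with no occurrence of the constant $1$): any identity of the form $w=1$, with $w$ a non-empty word, is equivalent, over monoids, to the pair of semigroup identities $wy=y$ and $yw=y$ for a fresh variable $y$; every remaining monoid identity is already of semigroup type. Hence $\mathbf{V}$ and $\mathbf{V}^s$ are axiomatised by the same set of semigroup identities, and the characterization follows.

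Granted the characterization, injectivity is immediate. Join-preservation follows because monoid homomorphisms, submonoids, and direct products of monoids pass, under the forgetful functor, to semigroup homomorphisms, subsemigroups, and direct products of the reducts, so any $M\in\mathbf{V}\vee\mathbf{W}$ has its reduct in the semigroup $\mathrm{HSP}$-closure of $\mathbf{V}^s\cup\mathbf{W}^s$. For meet-preservation, the non-trivial inclusion is $\mathbf{V}^s\wedge\mathbf{W}^s\subseteq(\mathbf{V}\wedge\mathbf{W})^s$. Here I would take $S\in\mathbf{V}^s\cap\mathbf{W}^s$, adjoin an external identity to form $S^1$, and show that $S^1\in\mathbf{V}\cap\mathbf{W}$. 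The point is that an $\mathrm{HSP}$-realisation $\phi\colon T\twoheadrightarrow S$ with $T\le\prod_i M_i^s$ and $M_i\in\mathbf{V}$ can be enlarged by the true identity of $\prod_i M_i$ to give a submonoid $T\cup\{1\}\le\prod_i M_i$ that covers $S^1$ as a monoid, so $S^1\in\mathbf{V}$ (and analogously in $\mathbf{W}$); then $S$ is recovered as a subsemigroup of $S^1\in\mathbf{V}\cap\mathbf{W}$.

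The step I expect to require the most care is exactly this meet-preservation argument, because a typical element of $\mathbf{V}^s\cap\mathbf{W}^s$ is a bare semigroup with no distinguished identity and must first be lifted to a monoid in $\mathbf{V}\cap\mathbf{W}$. Everything else is either a direct consequence of the key equivalence $M\in\mathbf{V}\Leftrightarrow M^s\in\mathbf{V}^s$ or a routine transfer of Birkhoff-style constructions across the forgetful functor.
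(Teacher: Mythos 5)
The paper itself gives no argument for this proposition --- it is quoted as semigroup folklore with references --- so your proposal can only be measured against what a correct proof requires. Your overall plan (reduce everything to the equivalence $M\in\mathbf V\Leftrightarrow M^s\in\mathbf V^s$ via the fact that monoid varieties are axiomatisable by identities in the semigroup signature, then check injectivity, joins and meets) is the standard route, and the key equivalence, injectivity and join-preservation are fine. One caution on the key equivalence: the converted system $\Sigma$ need not axiomatise $\mathbf V^s$ as you assert (for $\mathbf V=\var\{xy\approx yx^2\}$ the monoid variety also satisfies $x\approx x^2$, while the semigroup variety $\var\{xy\approx yx^2\}$ contains null semigroups and does not); what is true, and all you actually use, is that every member of $\mathbf V^s$ satisfies $\Sigma$ and that $\Sigma$ defines $\mathbf V$ within monoids.

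The genuine gap is in meet-preservation. It is simply false that $S\in\mathbf V^s$ implies $S^1\in\mathbf V$ when $S^1$ means $S$ with an \emph{external} identity always adjoined: take $\mathbf V=\mathbf W=\var\{x^n\approx 1\}$ (groups of exponent dividing $n$) and $S$ the semigroup reduct of the cyclic group of order $n$. Then $S\in\mathbf V^s$, but $S^1$ contains the old group identity $e_S$ with $e_S^n=e_S\neq 1_{S^1}$, so $S^1$ violates $x^n\approx 1$ and $S^1\notin\mathbf V$. Your covering argument breaks precisely here: if the identity $1$ of $\prod_i M_i$ already lies in $T$ (which is unavoidable when the $M_i$ are groups of finite exponent, since then $1=t^n\in T$), then $T\cup\{1\}=T$ maps onto $S$, not onto the strictly larger $S^1$, so nothing is available to map to the new identity. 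The repair is a case distinction that again leans on the key equivalence. If $S$ has an identity element $e$, then $(S,e)$ is a monoid with $(S,e)^s=S\in\mathbf V^s\cap\mathbf W^s$, so $(S,e)\in\mathbf V\wedge\mathbf W$ and $S\in(\mathbf V\wedge\mathbf W)^s$. If $S$ has no identity element, then in any realisation $\phi\colon T\twoheadrightarrow S$ with $T\le\prod_i M_i^s$ one must have $1\notin T$ (otherwise $\phi(1)$ would be an identity of $S$), and only then does your construction legitimately yield $S\cup\{1\}\in\mathbf V$, and likewise for $\mathbf W$. With that fix the proof is complete.
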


Recall that the variety $\mathbf V$ is said to be \emph{periodic} if all its monoids are periodic and \emph{aperiodic} if it does not contain any non-trivial group. A monoid variety $\mathbf V$ is called \emph{proper} if $\mathbf{V\ne MON}$. The proof of the following statement is similar to the arguments from the second paragraph of Section~2.1 of~\cite{Vernikov-07}.
 
\begin{lemma}
\label{lower-modular is periodic}
Let $\mathbf V$ be a proper monoid variety. If $\mathbf V$ is a lower-modular element of $\mathbb{MON}$ then $\mathbf V$ is periodic.
\end{lemma}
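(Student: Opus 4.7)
I would argue by contradiction: assume $\mathbf{V}$ is proper, lower-modular, yet not periodic, and produce monoid varieties $\mathbf{Y}\supseteq\mathbf{V}$ and $\mathbf{Z}$ for which the lower-modular identity
$$\mathbf{V}\vee(\mathbf{Y}\wedge\mathbf{Z})=\mathbf{Y}\wedge(\mathbf{V}\vee\mathbf{Z})$$
fails.

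The first step is to locate $\mathbf{V}$ in the lattice. Non-periodicity supplies a monoid $M\in\mathbf{V}$ with an element $a$ of infinite order, and the submonoid generated by $a$ is isomorphic to the free monogenic monoid $(\mathbb{N}_0,+)$, which therefore lies in $\mathbf{V}$. A routine identity check---the identities of $(\mathbb{N}_0,+)$ are exactly the consequences of commutativity, since any word normalizes to a multi-exponent tuple and $\mathbb{N}_0$ separates distinct such tuples---shows that $(\mathbb{N}_0,+)$ generates the variety of all commutative monoids. Denote this variety by $\mathbf{COM}$; then $\mathbf{COM}\subseteq\mathbf{V}$.

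Next, since $\mathbf{V}$ is proper and $\mathbf{V}\supseteq\mathbf{COM}$, every identity holding in $\mathbf{V}$ must be \emph{balanced} (a non-balanced identity would be refuted in $\mathbf{COM}$), and $\mathbf{V}$ satisfies at least one non-trivial balanced identity $u\approx v$ that fails in $\mathbf{MON}$. I would let $\mathbf{Y}$ be the monoid variety axiomatized by $u\approx v$, so $\mathbf{COM}\subseteq\mathbf{V}\subseteq\mathbf{Y}\subsetneq\mathbf{MON}$, and take $\mathbf{Z}$ to be the variety generated by a carefully chosen monoid $M_0\in\mathbf{MON}\setminus\mathbf{Y}$, with the twin goals that (a)~$\mathbf{Y}\wedge\mathbf{Z}\subseteq\mathbf{V}$ while (b)~$\mathbf{Y}\wedge(\mathbf{V}\vee\mathbf{Z})\not\subseteq\mathbf{V}$; together (a) and (b) contradict the lower-modular equation.

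The main obstacle is the construction of $M_0$: it must refute $u\approx v$ and yet, when intersected with $\mathbf{Y}$, contribute only behaviour already captured by $\mathbf{V}$. The parallel semigroup argument in the second paragraph of Section~2.1 of~\cite{Vernikov-07} uses a suitable Rees-matrix or nilpotent construction tuned to the chosen identity; here one must carry out an analogous construction in the monoid signature, respecting the adjoined identity element so that neither $M_0$ nor the objects appearing in its variety trivialize under the monoid axioms. Once $M_0$ is in hand, the verification of (a) and (b) reduces to tracking which balanced identities survive in $\mathbf{Z}$ and $\mathbf{V}\vee\mathbf{Z}$, which is a routine identity-based computation. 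This adaptation to the monoid setting is the technical heart of the proof.
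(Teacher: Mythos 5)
Your setup is sound as far as it goes: the reduction to showing that a non-periodic proper variety cannot be lower-modular, and the observation that non-periodicity forces $\mathbf V$ to contain the variety $\mathbf{COM}$ of all commutative monoids (via the free monogenic monoid), both match the paper. But the proof stops exactly where the actual work begins. You never construct the monoid $M_0$, nor the varieties $\mathbf Y$ and $\mathbf Z$ witnessing the failure of lower-modularity; you explicitly defer ``the technical heart of the proof'' to an unspecified ``Rees-matrix or nilpotent construction tuned to the chosen identity.'' That is a genuine gap, not a routine verification: conditions (a) and (b) pull in opposite directions (the $\mathbf Z$ you want must be large enough that $\mathbf V\vee\mathbf Z$ escapes $\mathbf V$ inside $\mathbf Y$, yet small enough that $\mathbf Y\wedge\mathbf Z$ collapses into $\mathbf V$), and nothing in your proposal indicates how to satisfy both simultaneously.

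The paper's mechanism is quite different from what you guessed, and it is worth seeing why it works. One takes $\mathbf Z=\mathbf G$ to be a \emph{minimal non-Abelian group variety} not contained in $\mathbf V$ (such a $\mathbf G$ exists because the minimal non-Abelian group varieties generate the variety of all semigroups, and $\mathbf{MON}$ embeds into $\mathbb{SEM}$, so if all of them lay in the proper variety $\mathbf V$ one would get a contradiction). Setting $\mathbf W=\mathbf V\vee\mathbf G$, one uses the fact that any variety containing $\mathbf{COM}$ is generated by its aperiodic members to find an aperiodic $\mathbf K\subseteq\mathbf W$ with $\mathbf K\nsubseteq\mathbf V$, and puts $\mathbf Y=\mathbf V\vee\mathbf K\supsetneq\mathbf V$. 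The two key closure facts are then: (i) every group in $\mathbf V\vee\mathbf K$ already lies in $\mathbf V$ (since $\mathbf K$ is aperiodic), so $\mathbf G\nsubseteq\mathbf Y$ and hence $\mathbf G\wedge\mathbf Y$, being a proper subvariety of a minimal non-Abelian group variety, is commutative and thus contained in $\mathbf V$; and (ii) $\mathbf G\vee\mathbf V=\mathbf W\supseteq\mathbf Y$. Lower-modularity then forces $\mathbf V=(\mathbf G\wedge\mathbf Y)\vee\mathbf V=(\mathbf G\vee\mathbf V)\wedge\mathbf Y=\mathbf Y$, contradicting $\mathbf V\subset\mathbf Y$. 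Your proposal contains none of these ingredients (minimal non-Abelian group varieties, generation by aperiodic members, the group-content lemma for joins with aperiodic varieties), so it cannot be completed along the lines you sketch without essentially starting over.
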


\begin{proof}
Suppose that $\mathbf V$ is non-periodic. Then $\mathbf V$ contains the variety of all
commutative monoids. It is proved~\cite[Lemma 2.16]{Vernikov-08} that the variety of all semigroups is generated by all minimal non-Abelian varieties of groups. This fact and Proposition~\ref{MON sublattice of SEM} imply that there exists a minimal non-Abelian group variety $\mathbf G$ such that $\mathbf G\nsubseteq \mathbf V$. Put $\mathbf W = \mathbf V \vee \mathbf G$. Clearly, $\mathbf V \subset \mathbf W$. As is well known, every semigroup variety that contains the variety of all commutative semigroups is generated by all its nilpotent and so aperiodic members (see~\cite{Volkov-96}, for instance). This fact and Proposition~\ref{MON sublattice of SEM} imply that there exists an aperiodic variety $\mathbf K$ such that $\mathbf K \subseteq \mathbf W$ but $\mathbf K\nsubseteq\mathbf V$. Put $\mathbf Y = \mathbf V \vee \mathbf K$. Clearly, $\mathbf V \subset\mathbf Y \subseteq \mathbf W$. It is proved in~\cite[Lemma~1.4]{Vernikov-07} that if $\mathbf U$ is a semigroup variety and $\mathbf X$ is an aperiodic semigroup variety then every group from the variety $\mathbf U\vee\mathbf X$ belongs to $\mathbf U$. Since $\mathbf G \nsubseteq \mathbf V$, this fact and Proposition~\ref{MON sublattice of SEM} imply that $\mathbf G \nsubseteq \mathbf V \vee \mathbf K = \mathbf Y$. Therefore, the variety $\mathbf G \wedge \mathbf Y$  is commutative, whence $\mathbf G \wedge\mathbf Y \subseteq \mathbf V$. Since $\mathbf V$ is a lower-modular element of $\mathbb{MON}$ and $\mathbf V\subseteq\mathbf Y$, we have
$$
\mathbf V = (\mathbf G \wedge \mathbf Y) \vee \mathbf V = (\mathbf G \vee \mathbf V) \wedge \mathbf Y = \mathbf W \wedge \mathbf Y = \mathbf Y,
$$
a contradiction with $\mathbf V \subset \mathbf Y$. We have proved that the variety $\mathbf V$ is periodic.
\end{proof}

The following notion was introduced by Perkins~\cite{Perkins-69} and often appeared in the literature. For any word $\mathbf w$, let $S(\mathbf w)$ denote the Rees quotient monoid of $F^1$ over the ideal of all words that are not subwords of $\mathbf w$. A word $\mathbf w$ is an \emph{isoterm} for a variety $\mathbf V$ if $\mathbf V$ violates any non-trivial identity of the form $\mathbf w\approx\mathbf w'$.
Put 
\begin{align*}
&\mathbf C_n=\var\{x^n\approx x^{n+1},\,xy\approx yx\} \text{ where } n\ge 2,\\
&\mathbf E=\var\{x^2\approx x^3,\,x^2y\approx xyx,\,x^2y^2\approx y^2x^2\}.
\end{align*}
Note that the variety $\mathbf C_2$ has already introduced before Proposition~\ref{neutral theorem}.

\begin{lemma}
\label{if V is mod element}
Let $\mathbf V$ be a monoid variety that contains the variety $\mathbf E$. Suppose that there exists $n\ge2$ such that $\mathbf V$ does not contain the variety $\mathbf C_{n+1}$. Put
\begin{align*}
\mathbf X=(\mathbf V\vee\mathbf C_{n+1})\wedge\var\{x^ky\approx yx^k\mid k>n\},\\
\mathbf Y=(\mathbf V\vee\mathbf C_{n+1})\wedge\var\{x^ky\approx yx^k\mid k\ge n\}.
\end{align*}
Then $\mathbf Y\subset \mathbf X$ and the varieties $\mathbf X$, $\mathbf Y$ and $\mathbf V$ generate the 5-element non-modular sublattice in $\mathbb{MON}$. In particular, $\mathbf V$ is not a modular element of the lattice $\mathbb{MON}$.
\end{lemma}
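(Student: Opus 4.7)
The plan is to exhibit the elements $\mathbf V\wedge\mathbf Y$, $\mathbf Y$, $\mathbf X$, $\mathbf V$, and $\mathbf V\vee\mathbf X$ as the vertices of a pentagon $N_5$ inside $\mathbb{MON}$. Concretely, I will verify the pentagon relations: $\mathbf Y\subsetneq\mathbf X$; $\mathbf V$ is incomparable with both $\mathbf X$ and $\mathbf Y$; $\mathbf V\vee\mathbf X=\mathbf V\vee\mathbf Y$; and $\mathbf V\wedge\mathbf X=\mathbf V\wedge\mathbf Y$. Non-modularity of $\mathbf V$ is then automatic: modularity applied to $\mathbf Y\le\mathbf X$ would force $\mathbf X=(\mathbf V\vee\mathbf Y)\wedge\mathbf X=(\mathbf V\wedge\mathbf X)\vee\mathbf Y=\mathbf Y$, contradicting strictness.

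The inclusion $\mathbf Y\subseteq\mathbf X$ is immediate since $\mathbf Y$ is a meet with a stronger identity system than $\mathbf X$. For the join equality, both $\mathbf V\vee\mathbf X$ and $\mathbf V\vee\mathbf Y$ equal $\mathbf V\vee\mathbf C_{n+1}$: the inclusions $\mathbf X,\mathbf Y\subseteq\mathbf V\vee\mathbf C_{n+1}$ hold by construction, and $\mathbf C_{n+1}$, being commutative, lies inside $\mathbf Y$, giving the reverse direction. Incomparability of $\mathbf V$ with $\mathbf X$ and $\mathbf Y$ follows from $\mathbf C_{n+1}\subseteq\mathbf X$ combined with $\mathbf C_{n+1}\nsubseteq\mathbf V$, together with $\mathbf E\subseteq\mathbf V$ and the fact that $\mathbf E$ does not satisfy $x^{n+1}y\approx yx^{n+1}$ (an identity forced on $\mathbf X$). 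The strict inclusion $\mathbf Y\subsetneq\mathbf X$ I would witness by exhibiting a monoid in $\mathbf V\vee\mathbf C_{n+1}$ that satisfies $x^ky\approx yx^k$ for every $k>n$ yet fails it at $k=n$; the natural candidate is a Perkins-style monoid $S(\mathbf w)$ drawn from $\mathbf V$ via $\mathbf E\subseteq\mathbf V$, with $\mathbf w$ chosen so that $x^n$ is an isoterm but $x^{n+1}$ is not.

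The main technical obstacle is the meet-collapse $\mathbf V\wedge\mathbf X=\mathbf V\wedge\mathbf Y$. Using $\mathbf V\subseteq\mathbf V\vee\mathbf C_{n+1}$, this simplifies to $\mathbf V\wedge\var\{x^ky\approx yx^k\mid k>n\}=\mathbf V\wedge\var\{x^ky\approx yx^k\mid k\ge n\}$, which reduces in turn to the assertion that every subvariety of $\mathbf V$ satisfying $x^ky\approx yx^k$ for all $k>n$ already satisfies $x^ny\approx yx^n$. This is a substantive identity-theoretic claim rather than a formal consequence, and I expect the proof to exploit the identities of $\mathbf E$ (especially $x^2\approx x^3$ and $x^2y\approx xyx$, which in every $\mathbf V$-monoid containing an $\mathbf E$-subobject yield enough leverage to transport commutativity across adjacent powers) together with the hypothesis $\mathbf C_{n+1}\nsubseteq\mathbf V$ to rule out degenerate cases. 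I would carry out this step by explicit word manipulation inside the free monoid of the relevant subvariety, in the spirit of classical identity-based techniques in the variety lattice of monoids. Once this meet-collapse is in place, the remaining pentagon relations assemble routinely and the non-modularity conclusion follows.
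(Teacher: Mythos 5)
Your overall skeleton is the same as the paper's: the five elements $\mathbf V\wedge\mathbf Y$, $\mathbf Y$, $\mathbf X$, $\mathbf V$, $\mathbf V\vee\mathbf X$ form the pentagon, the join equality is immediate from $\mathbf C_{n+1}\subseteq\mathbf Y\subseteq\mathbf X\subseteq\mathbf V\vee\mathbf C_{n+1}$, and incomparability comes from $\mathbf C_{n+1}\nsubseteq\mathbf V$ on one side and a failure of $x^ky\approx yx^k$ in $\mathbf E\subseteq\mathbf V$ on the other. But the two steps you defer are exactly the content of the lemma, and one of them you misdiagnose. The meet-collapse $\mathbf V\wedge\mathbf X=\mathbf V\wedge\mathbf Y$ is not ``a substantive identity-theoretic claim'' requiring word manipulation inside $\mathbf E$: since $\mathbf C_{n+1}\nsubseteq\mathbf V$, the variety $\mathbf V$ satisfies $x^n\approx x^{n+m}$ for some $m\ge1$ (this is the standard consequence you allude to), and any subvariety of $\mathbf V$ lying in $\var\{x^ky\approx yx^k\mid k>n\}$ satisfies $x^{n+m}y\approx yx^{n+m}$, hence $x^ny\approx x^{n+m}y\approx yx^{n+m}\approx yx^n$. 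That is the whole argument; your reduction of the problem is correct, but ``I expect the proof to exploit the identities of $\mathbf E$ \dots by explicit word manipulation'' is a placeholder, not a proof, so as written this key step is a genuine gap.

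The second gap is in the strictness $\mathbf Y\subsetneq\mathbf X$. Your candidate witness is right in spirit (the paper uses $S(yx^n)$), but your stated condition --- a Perkins monoid ``drawn from $\mathbf V$'' with $\mathbf w$ such that ``$x^n$ is an isoterm but $x^{n+1}$ is not'' --- does not work as stated: $yx^n$ is \emph{not} an isoterm for $\mathbf V$ alone, precisely because $\mathbf V$ satisfies $x^n\approx x^{n+m}$, so $S(yx^n)$ cannot be obtained from $\mathbf V$. What one must show is that $yx^n$ is an isoterm for the \emph{join} $\mathbf V\vee\mathbf C_{n+1}$: any identity $yx^n\approx\mathbf w$ of the join holds in $\mathbf E$, forcing $\mathbf w=yx^t$, and holds in $\mathbf C_{n+1}$, forcing $t=n$; only then does the Jackson--Sapir criterion put $S(yx^n)$ into $\mathbf V\vee\mathbf C_{n+1}$, where it satisfies $x^ky\approx yx^k$ for all $k>n$ (both sides vanish) but violates $x^ny\approx yx^n$. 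The presence of $\mathbf C_{n+1}$ in the join is essential here and is absent from your sketch. Finally, your incomparability claim that $\mathbf E$ fails $x^{n+1}y\approx yx^{n+1}$ is true but also needs the same citation-level fact about identities of $\mathbf E$ of the form $yx^r\approx\mathbf w$; as stated it is an unproved assertion.
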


\begin{proof}
Evidently, $\mathbf Y\subseteq \mathbf X$. We are going to verify that this inclusion is strict. In view of~\cite[Proposition 4.2]{Gusev-Vernikov-18}, if $\mathbf E$ satisfies an identity $yx^n\approx \mathbf w$ then $\mathbf w=yx^t$ for some $t\ge 2$. If the identity $yx^n\approx \mathbf w$ holds in $\mathbf C_{n+1}$ then it follows from commutative law. Taking into account the inclusion $\mathbf E\subseteq \mathbf V$ we have that the word $yx^n$  is an isoterm for $\mathbf V\vee\mathbf C_{n+1}$. Then $S(yx^n)\in \mathbf V\vee\mathbf C_{n+1}$ by~\cite[Lemma~5.3]{Jackson-Sapir-00}. Evidently, $S(yx^n)$ satisfies the identity $x^ky\approx yx^k$ whenever $k>n$. It follows that $S(yx^n)\in \mathbf X$. On the other hand, $S(yx^n)\notin \mathbf Y$ because $S(yx^n)$ violates the identity
\begin{equation}
\label{x^ny=yx^n}
x^ny\approx yx^n.
\end{equation}
Thus, $\mathbf Y\subset \mathbf X$.

It is well known and can be easily verified that if a monoid variety does not contain $\mathbf C_{n+1}$ then this variety satisfies the identity
\begin{equation}
\label{x^n=x^n+m}
x^n\approx x^{n+m}
\end{equation}
for some natural $m$ (see~\cite[Lemma~2.5]{Gusev-Vernikov-18}, for instance). In particular, an identity of such a form holds in $\mathbf V$. Then $\mathbf V$ violates the identity 
\begin{equation}
\label{x^n+my=yx^n+m}
x^{n+m}y\approx yx^{n+m}.
\end{equation}
Therefore, this identity does not hold in $\mathbf V\vee\mathbf C_{n+1}$, whence $\mathbf X\subset\mathbf V\vee\mathbf C_{n+1}$.

Evidently, $\mathbf V\vee\mathbf X=\mathbf V\vee\mathbf C_{n+1}=\mathbf V\vee\mathbf Y$. To complete the proof it remains to note that $\mathbf V\wedge\mathbf X=\mathbf V\wedge\mathbf Y$. Indeed, the variety $\mathbf V\wedge\mathbf X$ satisfies the identity~\eqref{x^ny=yx^n} because this identity follows from the identities~\eqref{x^n=x^n+m} and~\eqref{x^n+my=yx^n+m}. This implies the required conclusion.
\end{proof}

In fact, the following statement is well known (see~\cite[Lemma~2.1]{Gusev-Vernikov-18}, for instance).
 
\begin{lemma}
\label{group variety}
For a monoid variety $\mathbf V$, the following are equivalent:
\begin{itemize}
\item[\textup{a)}] $\mathbf V$ is a group variety;
\item[\textup{b)}] $\mathbf V$ satisfies an identity ${\bf u}\approx {\bf v}$ such that $\mathbf u$ contains a letter which does not occur in $\mathbf v$;
\item[\textup{c)}] $\mathbf{SL\nsubseteq V}$.\qed
\end{itemize}
\end{lemma}

A variety of monoids is called \emph{completely regular} if it consists of \emph{completely regular monoids}~(i.e., unions of groups).

\begin{proof}[Proof of Theorem~\ref{main result}]
The claims (iii) and (iv) are equivalent by Proposition~\ref{neutral theorem}. The implications (iii) $\Rightarrow$ (ii) $\Rightarrow$ (i) are obvious. It remains to prove the implication  (i) $\Rightarrow$ (iv). Let $\mathbf V$ be a proper monoid variety that is a modular and lower-modular element of the lattice $\mathbb{MON}$. Suppose that $\mathbf V$ is completely regular. If $\mathbf U$ is a completely regular monoid variety that is a modular element in $\mathbb{MON}$ then $\mathbf U$ is commutative because 
$$
\mathbf C_2\vee(\mathbf D\wedge \mathbf U)\subset \mathbf D\wedge(\mathbf C_2\vee \mathbf U),
$$
where $\mathbf D=\var\{x^2\approx x^3,\, x^2y\approx xyx\approx yx^2\}$, otherwise by~\cite[Lemma~3.1]{Gusev-18-AU} and $\mathbf U$ is aperiodic because
$$
\mathbf Q\vee(\mathbf B_{2,3}\wedge \mathbf U)\subset \mathbf B_{2,3}\wedge(\mathbf Q\vee \mathbf U),
$$
where $\mathbf B_{2,3}=\var\{x^2\approx x^3\}$ and $\mathbf Q=\var\{yxyzxz\approx yxzxyxz\}$, otherwise by~\cite[Lemma~3.2]{Gusev-18-AU}. Since every completely regular aperiodic variety is a variety of idempotent monoids, this implies that $\mathbf V\subseteq\mathbf{SL}$. Therefore, $\mathbf V\in\{\mathbf T,\mathbf{SL}\}$.

Suppose now that $\mathbf V$ is a non-completely regular monoid variety. Lemma~\ref{lower-modular is periodic} implies that $\mathbf V$ is periodic. It is well known that $\mathbf V$ satisfies the identity~\eqref{x^n=x^n+m} for some $n\ge2$ and $m\ge1$. The identity~\eqref{x^n=x^n+m} does not hold in $\mathbf C_{n+1}$, whence $\mathbf C_{n+1}\nsubseteq \mathbf V$. Then $\mathbf E\nsubseteq \mathbf V$ by Lemma~\ref{if V is mod element}. Put $\mathbf W=\mathbf V\vee\mathbf E$. Clearly, $\mathbf V\subset\mathbf W$. 

Put $\mathbf{LRB}=\var\{xy\approx xyx\}$. We are going to verify that $\mathbf{LRB}\nsubseteq \mathbf W$. If $\mathbf V$ is non-commutative then Lemmas~2.14 and 4.1 and Proposition~4.2 of~\cite{Gusev-Vernikov-18} imply that $\mathbf V$ satisfies the identity
\begin{equation}
\label{yx^r=x^syx^t}
yx^r\approx x^syx^t
\end{equation} 
for some $s\ge1$, $t\ge0$, $s+t\ge 2$ and $r\ge 2$. If $\mathbf V$ is commutative then $\mathbf V$ satisfies the identity~\eqref{yx^r=x^syx^t} with $s=t=1$ and $r=2$. The variety $\mathbf E$ satisfies the identity 
\begin{equation}
\label{y^2x^r=x^sy^2x^t}
y^2x^r\approx x^sy^2x^t.
\end{equation} 
The identity~\eqref{y^2x^r=x^sy^2x^t} follows from the identity~\eqref{yx^r=x^syx^t}. Therefore, the identity~\eqref{y^2x^r=x^sy^2x^t} holds in $\mathbf W$. On the other hand, $\mathbf{LRB}$ satisfies the identities $y^2x^r\approx yx$ and $x^sy^2x^t\approx xy$. Therefore, $\mathbf{LRB}$ violates the identity~\eqref{y^2x^r=x^sy^2x^t}, whence $\mathbf{LRB}\nsubseteq \mathbf W$.

In view of~\cite[Proposition~4.7]{Wismath-86}, the subvariety lattice of $\mathbf{LRB}$ is the chain $\mathbf T\subset\mathbf{SL}\subset \mathbf{LRB}$. According to Lemma~\ref{group variety}, $\mathbf{SL}\subseteq \mathbf V$ and, therefore, $\mathbf{SL}\subseteq \mathbf W$. It follows that
$$
\mathbf V\vee(\mathbf W\wedge\mathbf{LRB})=\mathbf V\vee\mathbf{SL}=\mathbf V.
$$
On the other hand, in view of~\cite[Corollary~2.6]{Gusev-Vernikov-18}, every non-completely regular monoid variety contains the variety $\mathbf C_2$. It is proved in~\cite[Proposition~4.1]{Lee-12} that $\mathbf E\subset\mathbf C_2\vee\mathbf{LRB}$. This implies that $\mathbf W\subseteq\mathbf V\vee\mathbf{LRB}$. Thus,
$$
\mathbf W\wedge(\mathbf V\vee\mathbf{LRB})=\mathbf W.
$$
Then, since $\mathbf V\subset\mathbf W$, the variety $\mathbf V$ is not a lower-modular element in $\mathbb{MON}$. A contradiction.
\end{proof}

In view of Proposition~\ref{neutral theorem}, an element is neutral in $\mathbb{MON}$ if and only if this element is both modular, lower-modular and upper-modular in $\mathbb{MON}$. Theorem~\ref{main result} establishes more stronger result. Namely, the property of being upper-modular element can be omitted. In view of Propositions~\ref{neutral theorem} and~\ref{costandard theorem}, the variety $\mathbf C_2$ is a costandard (and, therefore, a modular) element  but is not a neutral one in $\mathbb{MON}$. Thus, the property of being lower-modular element cannot be omitted. The following question is still open

\begin{question}
\label{low-modular=neutral?}
Is it true that an arbitrary lower-modular element of the lattice $\mathbb{MON}$ is a neutral element of this lattice?
\end{question}

In conclusion, we note that the properties of being modular and costandard elements are not equivalent in $\mathbb{MON}$. Indeed, Proposition~\ref{costandard theorem} implies that the variety $\mathbf D$ is not a costandard element of $\mathbb{MON}$. At the same time, the following statement is true. 

\begin{proposition}
\label{D is modular}
The variety $\mathbf D$ is a modular element of the lattice $\mathbb{MON}$.
\end{proposition}

\begin{proof}
Suppose that $\mathbf D$ is not a modular element of the lattice $\mathbb{MON}$. Then~\cite[Proposition~2.1]{Jezek-81} implies that there exist varieties $\mathbf U$ and $\mathbf W$ such that $\mathbf U\subset \mathbf W$ and the varieties $\mathbf D$, $\mathbf U$ and $\mathbf W$ generate the 5-element non-modular sublattice in $\mathbb{MON}$. Clearly, $\mathbf D\nsubseteq \mathbf U$ and $\mathbf D\nsubseteq \mathbf W$. It is verified in~\cite[Lemma~2.12]{Gusev-Vernikov-18} that any variety that does not contain $\mathbf D$ is either completely regular or commutative. It follows that $\mathbf W$ is either completely regular or commutative.   

Recall that an element $x$ of a lattice $L$ is called \emph{codistributive} if 
$$
x\wedge(y\vee z)=(x\wedge y)\vee(x\wedge z)
$$ 
for all $y,z\in L$. It is well known that a codistributive element is upper-modular. It is proved in~\cite[Proposition~1.4]{Gusev-18-AU} that each commutative variety of monoids is a codistributive and so an upper-modular element of the lattice $\mathbb{MON}$. Since the varieties $\mathbf D$, $\mathbf U$ and $\mathbf W$ generate the 5-element non-modular sublattice in $\mathbb{MON}$, the variety $\mathbf W$ cannot be commutative. Therefore, $\mathbf W$ is completely regular. Then $\mathbf U$ is completely regular too.

The rest of the proof largely repeats the arguments from the last paragraph of the proof of Theorem~1.2 in~\cite{Gusev-18-AU}. We provide these arguments here for the reader convenience and for the sake of completeness. First, suppose that $\mathbf U$ is a group variety. Then $\mathbf{SL} \nsubseteq \mathbf U$. If $\mathbf W$ consists not only of groups then Lemma~\ref{group variety} implies that $\mathbf{SL} \subseteq \mathbf W$. Then $\mathbf U\wedge \mathbf D=\mathbf T$ but $\mathbf{SL}\subseteq\mathbf W\wedge \mathbf D$. This contradicts $\mathbf U\wedge \mathbf D=\mathbf W\wedge \mathbf D$. So, $\mathbf W$ is a group variety. It is proved in~\cite[Lemma~2.6]{Vernikov-08} that if $\mathbf X$ is an aperiodic semigroup variety and $\mathbf G$ is a group variety then $\mathbf G$ is the largest group subvariety in $\mathbf G\vee\mathbf X$. This fact and Proposition~\ref{MON sublattice of SEM} imply that $\mathbf U$ is the largest group subvariety in $\mathbf U\vee\mathbf D$. But this is impossible because $\mathbf W$ is a group variety and
$$
\mathbf U\subset\mathbf W\subset \mathbf W\vee\mathbf D=\mathbf U\vee\mathbf D.
$$

Suppose now that $\mathbf U$ is not a group variety. Then $\mathbf{SL}\subseteq \mathbf U$ by Lemma~\ref{group variety}. Then $\mathbf{SL}\subseteq \mathbf U\wedge \mathbf D=\mathbf W\wedge \mathbf D\subseteq \mathbf W$, whence $\mathbf W$ is not a group variety. Since $\mathbf U$ is completely regular, $\mathbf U$ satisfies $x\approx x^{n+1}$ for some natural $n$. Let $n$ be the least number with such a property, while $\Sigma$ be an identity basis of the variety $\mathbf U$. We denote by $\zeta$ the endomorphism of the monoid $F^1$ which maps each letter $x$ into the word $x^{n+1}$. Put
$$
\Sigma^\ast=\{\zeta(\mathbf u)\approx \zeta(\mathbf v)\mid \mathbf u\approx\mathbf v\in \Sigma\}.
$$
Obviously, $\mathbf U = \var\{x\approx x^{n+1},\Sigma^\ast\}$. If $\mathbf p \approx \mathbf q\in\Sigma^\ast$ then the words $\mathbf p$ and $\mathbf q$ depend on the same letters by Lemma~\ref{group variety}. It follows from~\cite[Proposition~2.13]{Gusev-Vernikov-18} that if some words $\mathbf s$ and $\mathbf t$ do not contain simple letters (i.e., that occur in the word only once) and depend on the same letters then the identity $\mathbf s\approx \mathbf t$ holds in $\mathbf D$. This fact implies that $\mathbf D$ satisfies $\Sigma^\ast$. Since $\mathbf U\vee \mathbf D=\mathbf W\vee \mathbf D$, the variety $\mathbf W$ satisfies $\Sigma^\ast$ too. Since $\mathbf D$ satisfies $x^2\approx x^3$ and $\mathbf U$ satisfies $x\approx x^{n+1}$, the identity $x^2\approx x^{n+2}$ hokds in $\mathbf U\vee \mathbf D=\mathbf W\vee \mathbf D$. Taking into account that $\mathbf W$ is completely regular, we get that $x\approx x^{n+1}$ holds in $\mathbf W$. Then $\mathbf W\subseteq \mathbf U$. We obtain a contradiction with the choice of the varieties $\mathbf U$ and $\mathbf W$. Thus, we have proved that $\mathbf D$ is a modular element of the lattice $\mathbb{MON}$.
\end{proof}

\subsection*{Acknowledgments.} The author is sincerely grateful to Professor Boris Vernikov for his assistance in the writing of the manuscript.

\small

\end{document}